\def\frk{\frak}               
\def\mm{{\frk m}}
\def\Phi{{\frk n}}
\def\Phi{{\frk N}}
\def\opn#1#2{\def#1{\operatorname{#2}}} 
\opn\chara{char} \opn\length{\ell} \opn\pd{pd} \opn\rk{rk}
\opn\projdim{proj\,dim} \opn\injdim{inj\,dim} \opn\rank{rank}
\opn\depth{depth} \opn\sdepth{sdepth} \opn\fdepth{fdepth}
\opn\grade{grade} \opn\height{height} \opn\embdim{emb\,dim}
\opn\codim{codim}  \opn\min{min} \opn\max{max}
\opn\Tr{Tr} \opn\bigrank{big\,rank}
\opn\superheight{superheight}\opn\lcm{lcm}
\opn\trdeg{tr\,deg}
\opn\reg{reg} \opn\lreg{lreg} \opn\ini{in} \opn\lpd{lpd}
\opn\size{size}
\opn\div{div} \opn\Div{Div} \opn\cl{cl} \opn\Cl{Cl}
\opn\Spec{Spec} \opn\Supp{Supp} \opn\supp{supp} \opn\Sing{Sing}
\opn\Ass{Ass} \opn\Min{Min}
\opn\Ann{Ann} \opn\Rad{Rad} \opn\Soc{Soc}
\opn\Im{Im} \opn\Ker{Ker} \opn\Coker{Coker} \opn\Am{Am}
\opn\Hom{Hom} \opn\Tor{Tor} \opn\Ext{Ext} \opn\End{End}
\opn\Aut{Aut} \opn\id{id}  \opn\deg{deg}
\opn\nat{nat}
\opn\pff{pf}
\opn\Pf{Pf} \opn\GL{GL} \opn\SL{SL} \opn\mod{mod} \opn\ord{ord}
\opn\Gin{Gin} \opn\Hilb{Hilb}
\opn\aff{aff} \opn\con{conv} \opn\relint{relint} \opn\st{st}
\opn\lk{lk} \opn\cn{cn} \opn\core{core} \opn\vol{vol}
\opn\link{link} \opn\star{star}
\opn\gr{gr}
\def\pot#1#2{#1[\kern-0.28ex[#2]\kern-0.28ex]}
\opn\dirlim{\underrightarrow{\lim}}
\opn\inivlim{\underleftarrow{\lim}}
\let\to=\rightarrow
\def\Implies{\ifmmode\Longrightarrow \else
        \unskip${}\Longrightarrow{}$\ignorespaces\fi}
\def\implies{\ifmmode\Rightarrow \else
        \unskip${}\Rightarrow{}$\ignorespaces\fi}
\def\iff{\ifmmode\Longleftrightarrow \else
        \unskip${}\Longleftrightarrow{}$\ignorespaces\fi}
\newtheorem{Theorem}{Theorem}[]
\newtheorem{Lemma}[Theorem]{Lemma}
\newtheorem{Proposition}[Theorem]{Proposition}
\newtheorem{Remark}[Theorem]{Remark}
\let\epsilon\varepsilon
\let\phi=\varphi
\let\kappa=\varkappa
\def\qed{\ifhmode\textqed\fi
      \ifmmode\ifinner\quad\qedsymbol\else\dispqed\fi\fi}
\def\textqed{\unskip\nobreak\penalty50
       \hskip2em\hbox{}\nobreak\hfil\qedsymbol
       \parfillskip=0pt \finalhyphendemerits=0}
\def\dispqed{\rlap{\qquad\qedsymbol}}
\opn\dis{dis}
\def\pnt{{\raise0.5mm\hbox{\large\bf.}}}
\opn\Lex{Lex}
\begin{document}

\title{ Simple General Neron Desingularization in  local $\mathbb{Q}$-algebras.}

\author{ Dorin Popescu }
\thanks{}

\address{Dorin Popescu, Simion Stoilow Institute of Mathematics of the Romanian Academy, Research unit 5,
University of Bucharest, P.O.Box 1-764, Bucharest 014700, Romania}
\email{dorin.popescu@imar.ro}

\begin{abstract} We give here a  easier proof of the so-called General Neron Desingularization in the frame of local algebras.

  {\it Key words } : Smooth morphisms,  regular morphisms\\
 {\it 2010 Mathematics Subject Classification: Primary 13B40, Secondary 14B25,13H05,13J15.}
\end{abstract}

\maketitle

\vskip 0.5 cm

\section*{Introduction}

A ring morphism $u:A\to A'$ of Noetherian rings has  {\em regular fibers} if for all prime ideals $p\in \Spec A$ the ring $A'/pA'$ is a regular  ring.
It has {\em geometrically regular fibers}  if for all prime ideals $p\in \Spec A$ and all finite field extensions $K$ of the fraction field of $A/p$ the ring  $K\otimes_{A/p} A'/pA'$ is regular.
A flat morphism of Noetherian rings $u$ is {\em regular} if its fibers are geometrically regular. If $u$ is regular of finite type then $u$ is called {\em smooth}.

The following theorem extends N\'eron's desingularization (see \cite{N}, \cite{KAP}) and it was used to solve different problems concerning  projective modules over regular rings, or in Artin Approximation Theory (see
\cite{A}, \cite{P}, \cite{P1}, \cite{P2}, \cite{P3}, \cite{S}, \cite{R}, \cite{H}).

\begin{Theorem} (General N\'eron Desingularization, Popescu \cite{P}, \cite{P'}, \cite{P1}, Andr\'e \cite{An}, Swan \cite{S})\label{gnd}  Let $u:A\to A'$ be a  regular morphism of Noetherian rings and $B$ an  $A$-algebra of finite type. Then  any $A$-morphism $v:B\to A'$   factors through a smooth $A$-algebra $C$, that is $v$ is a composite $A$-morphism $B\to C\to A'$.
\end{Theorem}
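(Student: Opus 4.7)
The plan is to follow the classical Popescu--André--Swan scheme, reducing the existence of a smooth factorisation to a Jacobian-ideal criterion and then resolving Jacobian obstructions one prime of $A'$ at a time by exploiting the geometric regularity of the fibres of $u$.

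Since $B$ is of finite type and $A$ is Noetherian, write $B = A[X_1,\ldots,X_n]/I$ with $I = (f_1,\ldots,f_s)$, and set $\mathbf{a}' = (v(X_1),\ldots,v(X_n)) \in (A')^n$, so $f_j(\mathbf{a}') = 0$. Recall the ``standard smooth'' or Jacobian ideal $H_{B/A} \subset B$: for each $r$-element subset $\{f_{i_1},\ldots,f_{i_r}\}$ of the generators and each $r \times r$ minor $\Delta$ of the matrix $(\partial f_{i_\ell}/\partial X_k)$, include $\Delta \cdot ((f_{i_1},\ldots,f_{i_r}):I)$ in $H_{B/A}$. The Jacobian criterion asserts that for any $h \in H_{B/A}$, the localisation $B_h$ is smooth over $A$. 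Thus the problem reduces to producing a finitely presented $A$-algebra $B'$, a factorisation $B \to B' \xrightarrow{v'} A'$ of $v$, and generators $h_1,\ldots,h_m$ of $H_{B'/A}$ whose images $v'(h_i)$ generate the unit ideal of $A'$: picking $b_i \in A'$ with $\sum b_i v'(h_i) = 1$, the $A$-algebra
\[
C := B'[T_1,\ldots,T_m]/\bigl(1 - \textstyle\sum T_i h_i\bigr)
\]
receives $v'$ via $T_i \mapsto b_i$, factors $v$, and is smooth over $A$, because on $\Spec C$ the $h_i$ generate the unit ideal and each principal open piece $C_{h_i} \cong B'_{h_i}[T_j : j \ne i]$ is a polynomial ring over a smooth $A$-algebra.

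The main work is to enlarge $B$ so as to increase $v(H_{B/A}) \cdot A'$. Argue by Noetherian induction on the closed set $Z := V(v(H_{B/A}) \cdot A') \subset \Spec A'$. If $Z \ne \emptyset$, pick a minimal prime $\mathfrak{q} \in Z$, set $\mathfrak{p} := u^{-1}(\mathfrak{q})$, and invoke the geometric regularity of the fibre $A'_\mathfrak{q}/\mathfrak{p} A'_\mathfrak{q}$ over $k(\mathfrak{p})$. Using this, lift a sufficiently generic regular system of parameters from the geometric fibre to obtain elements of $A'$ cutting out $\Ker(B \otimes_A A_\mathfrak{p} \to A'_\mathfrak{q})$ in a flat manner. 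Incorporate these lifts into a larger presentation $B'$ by adjoining new variables $Y_1,\ldots,Y_t$ and new relations, expressing a chosen Jacobian minor of $B$ in terms of the $Y$'s modulo error terms vanishing at $\mathbf{a}'$; arrange things so that the enlarged Jacobian matrix possesses an $(r+t) \times (r+t)$ minor whose image under the extended map $v'$ lies outside $\mathfrak{q}$. Then $V(v'(H_{B'/A}) \cdot A')$ is strictly smaller than $Z$ near $\mathfrak{q}$ (and creates no new components), and Noetherian induction on $\Spec A'$ terminates after finitely many iterations with the Jacobian image equal to all of $A'$.

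The technical heart of the argument, and the main obstacle, is this desingularisation step: producing, from geometric regularity of the fibres alone, a compatible enlargement $(B, v) \leadsto (B', v')$ that (i) remains compatible with $v$, (ii) does not spoil prior Jacobian successes elsewhere on $\Spec A'$, and (iii) genuinely enlarges $v(H_{B/A})$ modulo $\mathfrak{q}$. Mere flatness of $u$ is insufficient: geometric regularity of the fibres is used essentially to guarantee enough independent lifts, and the explicit combinatorial construction of the new variables and relations is delicate. This is the core of Popescu's original proof, reformulated more transparently by André and Swan. Once it is established, the Jacobian criterion, the principal-open smoothing construction of the first paragraph, and Noetherian induction on $Z$ combine to produce the smooth $A$-algebra $C$ and the factorisation $v\colon B \to C \to A'$, proving the theorem for arbitrary Noetherian $A, A'$.
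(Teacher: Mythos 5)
Your proposal is a strategy outline rather than a proof. The architecture you describe — reduce to making $v(H_{B/A})A'$ the unit ideal, then perform Noetherian induction on the radical $h_B=\sqrt{v(H_{B/A})A'}$, resolving one minimal prime $\mathfrak{q}$ of the non-smooth locus at a time while only enlarging the Jacobian ideal — is exactly the architecture used in the paper for Theorem \ref{gnd0}, and your final smoothing construction $C=B'[T_1,\ldots,T_m]/(1-\sum T_ih_i)$ is the standard trick (cf.\ \cite[Lemma 2.4]{P0}). But the entire mathematical content of the theorem lies in the single-prime resolution step, and you do not prove it: you explicitly defer it to ``the core of Popescu's original proof.'' Nothing in your text shows how geometric regularity of the fibre $A'_{\mathfrak{q}}/\mathfrak{p}A'_{\mathfrak{q}}$ actually yields an enlargement $(B,v)\leadsto(B',v')$ with $h_B\subset h_{B'}\not\subset\mathfrak{q}$; phrases such as ``lift a sufficiently generic regular system of parameters \dots\ in a flat manner'' and ``arrange things so that the enlarged Jacobian matrix possesses an $(r+t)\times(r+t)$ minor \dots\ outside $\mathfrak{q}$'' state the desired conclusion without constructing anything. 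In the paper this step is the content of Lemma \ref{l0}, Proposition \ref{p1}, Theorem \ref{m1} and Lemma \ref{l}: one first adjoins variables mapping to a regular system of parameters of the fibre so that, by the local criterion of flatness, the situation becomes $\mathfrak{q}A'_{\mathfrak{q}}$-primary (this is where the characteristic-zero hypothesis is used to keep the auxiliary morphism regular); one then inducts on $\height \mathfrak{q}$ with the Artinian case as base, and the inductive step is the explicit lifting machinery of Proposition \ref{p1} — the element $d$, the exponent $e$ with $(0:_Ad^e)=(0:_Ad^{e+1})$, the polynomials $h$ and $g$ built from adjoints of Jacobian minors, and the units $s,s',s''$. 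None of this is reconstructed in your argument, so the proof has a genuine gap precisely at its heart.

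Two smaller remarks. First, the termination of your induction is correctly organized (it is the ascending chain $h_B\subset h_{B_1}\subset h_{B_2}\subset\cdots$ of radical ideals in the Noetherian ring $A'$), and your condition (ii) — that each enlargement must not shrink the Jacobian ideal elsewhere on $\Spec A'$ — is the right invariant to carry along; but it is a requirement you impose on the unproven step, not something you verify. Second, be aware that the statement you set out to prove concerns arbitrary Noetherian rings, whereas the paper itself only proves the case of local $\mathbb{Q}$-algebras (Theorem \ref{gnd0}) and cites the general theorem; if you intend your argument to cover the general case, the reduction from a general Noetherian $A'$ to the local situation (where, e.g., elements congruent to $1$ modulo the relevant ideal are units, as used at the end of Proposition \ref{p1}) also needs to be addressed.
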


 Other authors gave  similar proofs (see \cite{Sp}, \cite{Q}, \cite{MR}) but most of them were not constructive. In \cite{AP}, \cite{PP}, \cite{PP1}, 
 \cite{KK},    
 \cite{KPP},
 \cite{KAP}, \cite{ZKPP}, there exist some constructive proofs with algorithms in some special cases. We mention that all these proofs 
are simpler when we deal with $\mathbb{Q}$-algebras because in this case all residue field extensions are separable and it is easier to handle the regularity of the morphisms.

  Let $A$ be a Noetherian ring, $B=A[Y]/I$, $Y=(Y_1,\ldots,Y_n)$. If $f=(f_1,\ldots,f_r)$, $r\leq n$ is a system of polynomials from $I$ then we can define the ideal $\Delta_f$ generated by all $r\times r$-minors of the Jacobian matrix $(\partial f_i/\partial Y_j)$.   After Elkik \cite{El} let $H_{B/A}$ be the radical of the ideal $\sum_f ((f):I)\Delta_fB$, where the sum is taken over all systems of polynomials $f$ from $I$ with $r\leq n$.
 $H_{B/A}$ defines the non smooth locus of $B$ over $A$.
  $B$ is {\em standard smooth} over $A$ if  there exists  $f$ in $I$ as above such that $B= ((f):I)\Delta_fB$.

  In \cite{ZKPP}, it is given  the following constructive theorem.
\vskip 0.3 cm
 \begin{Theorem} \label{m}  Let $A$ and $A'$ be Noetherian local rings, $u:A\to A'$ be a regular morphism and $B$ an $A$-algebra of finite type. Suppose that $A'$ is Henselian and the maximal ideal $\mm$ of $A$ generates the maximal ideal of $A'$. Any $A$-morphism $v:B\to A'$  such that $v(H_{B/A}A')$ is $\mm A'$-primary ideal factors through a standard smooth $A$-algebra $B'$.
 \end{Theorem}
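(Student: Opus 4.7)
The plan is to distill a concrete ``Jacobian witness'' from the hypothesis that $v(H_{B/A}A')$ is $\mm A'$-primary, use it to build an explicit standard smooth $A$-algebra $B'$, and then lift an approximate factorization through $B'$ via the Henselian property of $A'$.

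First I would fix a single triple $(f,M,h)$, with $f=(f_1,\dots,f_r)\subseteq I$ for some $r\leq n$, $M$ an $r\times r$ minor of the Jacobian matrix $(\partial f_i/\partial Y_j)$, and $h\in ((f):I)$, such that $v(hM)A'$ is itself $\mm A'$-primary. Since $H_{B/A}$ is the radical of $\sum_f ((f):I)\Delta_f B$ and $A'$ is Noetherian, finitely many such triples suffice to generate an $\mm A'$-primary ideal after applying $v$; a standard concatenation argument (passing to a block-diagonal system in an enlarged variable set, and taking products of the relevant minors and cofactor elements) collapses them into a single triple.

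Next I would construct $B'$. The naive candidate is the localization $B_{hM}$, which has the standard smooth presentation
\[
C \;=\; A[Y,W]/(f_1,\dots,f_r,\,WhM-1),
\]
because $h\in((f):I)$ forces $I\cdot C = (f)\cdot C$, while the Jacobian of the $r{+}1$ given generators with respect to $(Y_1,\dots,Y_r,W)$ is block triangular with determinant $hM^{2}$, a unit in $C$ (from $WhM=1$ both $h$ and $M$ are units). Unfortunately this localization cannot receive $v$, since $v(hM)\in\mm A'$. I would therefore deform the above presentation: introduce new variables $Z=(Z_1,\dots,Z_r)$, replace each $f_i$ by a deformation $F_i = f_i - MZ_i + (\text{correction quadratic in }Z)$, and adjust the localization-style equation accordingly. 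The corrections should be chosen so that (i)~$B'$ is still standard smooth, witnessed by a Jacobian unit that does not require $hM$ itself to be inverted; (ii)~the natural inclusion $A[Y]\to A[Y,Z]$ induces a well-defined morphism $B\to B'$ sending each $Y_i$ to $Y_i$ plus a correction in $(Z)B'$, killing all of $I$ in $B'$ (using $h\in ((f):I)$ and Cramer-type identities to handle the relations of $I$ not in $(f)$); and (iii)~the assignment $Y\mapsto v(Y)$, $Z\mapsto 0$ is an approximate $A$-morphism $B'\to A'$ modulo a suitably high power of $\mm A'$.

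Finally, the standard smoothness of $B'$ together with the Henselianity of $A'$ and the approximate solution of (iii) yield, via a Jacobian version of Hensel's lemma applied to the new variables $Z$, an exact $A$-morphism $w\colon B'\to A'$ extending $v$ along $B\to B'$, giving the required factorization. The main obstacle is the precise design of the deformation in (i)--(iii): the new relations must simultaneously witness standard smoothness, admit a map from all of $B$ (not merely from the subring $A[Y]/(f)$), and support an approximate solution compatible with Hensel. Reconciling these requirements---in particular enforcing $I\mapsto 0$ in $B'$ while $h$ remains a non-unit there---is the constructive heart of General N\'eron Desingularization, and the place where the simpler structure of $\QQ$-algebras (separable residue extensions and characteristic zero Taylor expansions) does the most work in keeping both the smoothness verification and the approximation estimates manageable.
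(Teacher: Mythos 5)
There is a genuine gap, in fact two. First, your opening reduction cannot work when $\dim A'\geq 2$: a principal ideal $v(hM)A'$ can never be $\mm A'$-primary in that case, so no single triple $(f,M,h)$ of the kind you want exists, and no concatenation or product trick produces one (a product of elements generating an $\mm A'$-primary ideal lies in each principal factor, hence generates an ideal of height at most one). The argument the paper relies on uses instead a \emph{sum} $P=\sum_{j}N_jM_j$ of such products (Proposition \ref{p}) together with an induction on a system of parameters $\gamma_1,\dots,\gamma_m\in u^{-1}(v(H_{B/A})A')$, as in the proof of Theorem \ref{m1}: at each stage one needs only one element $d\equiv P$ modulo $I$, with no primality requirement on $dA'$, and the situation is first resolved modulo $d^{2e+1}$ by the induction hypothesis.

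Second, and more seriously, your final step is precisely where the difficulty lives, and your sketch both defers it and never invokes the regularity of $u$. Since $v(hM)\in\mm A'$, the Jacobian criterion fails at the candidate point $(v(Y),0)$, so no Jacobian form of Hensel's lemma applies directly; producing relations whose Jacobian witness \emph{is} a unit at the image of $v$ is exactly the content of Proposition \ref{p1}, and it requires as input an auxiliary flat finite-type $A$-algebra $D$ together with a factorization of $\bar v$ through $D/d^{2e+1}D$. That auxiliary datum is supplied by the dimension induction, and at the Artinian base case by the regularity of $u$ (via \cite[Corollary 3.3]{P0}); regularity must enter somewhere, since the statement fails for merely flat morphisms, yet it appears nowhere in your outline. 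The explicit polynomials $h$ and $g$ of (\ref{def of h1}) and (\ref{def of g1}), built from the adjugate matrices $G_j$ and the Taylor expansion of $f$ at the approximate solution $y'\in D^n$, are the ``precise design of the deformation'' you leave open; once they are in place one localizes at elements $\equiv 1$ modulo $(d,T)$, and even Henselianity is not needed for that last step (locality of $A'$ suffices). As written, your proposal correctly identifies the shape of the construction but omits its entire substance together with the inductive mechanism that makes it applicable.
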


The proof was based on 
the following proposition (see \cite[Proposition 3]{ZKPP}, where the conditions "$A'$ is Henselian, and $A,A'$ have  dimensions $m$ are superfluous" and we remove them here).

 \begin{Proposition}\label{p}
 Let $A$ and $A'$ be Noetherian local rings  and $u:A\to A'$ be a regular morphism.  Let $B=A[Y]/I$, $Y=(Y_1,\ldots,Y_n)$, $f=(f_1,\ldots,f_r)$, $r\leq n$ be a system of polynomials from $I$ as above, $(M_j)_{j\in [l]}$  some $r\times r$-minors  \footnote{ We use the notation $[l]=\{1,\ldots,l\}$.}  of the Jacobian matrix $(\partial f_i/\partial Y_{j'})$,  $(N_j)_{j \in [l]} \in  ((f):I)$ and set $P:=\sum _{j=1}^l N_jM_j$. Let  $v:B\to A'$ be an $A$-morphism. Suppose that
\begin{enumerate}
 \item{} there exists an element $d\in A$ such that $d\equiv P $ modulo $I$ and

 \item{} there exist a smooth $A$-algebra $D$ and an $A$-morphism $\omega:D\to A'$ such that $\Im v\subset \Im\omega +d^{2e+1}A'$ and for ${\bar A}=A/(d^{2e+1})$ (defining e by $(0:_Ad^e)=(0:_Ad^{e+1})$) the map $\bar{v}={\bar A} {\otimes}_{A} v: \bar{B}=B/d^{2e+1}B \to \bar{A}'=A'/d^{2e+1}A'$ factors through $\bar{D}=D/d^{2e+1}D$.
\end{enumerate}
 Then there exists a $B$-algebra $B'$ which is standard smooth over $A$ such that $v$ factors through $B'$.
 \end{Proposition}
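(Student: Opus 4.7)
The plan is to adapt the classical N\'eron-style perturbation argument: use hypothesis (2) to lift $v$ approximately through $D$, shift the variables $Y_i$ by an amount proportional to $d^{e+1}$, and then exploit hypothesis (1) together with the identity $d \equiv \sum_{j=1}^l N_j M_j \pmod I$ to carve out a standard smooth $A$-algebra $B'$ inside the resulting shifted algebra through which $v$ factors.

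First, the factorization of $\bar v$ through $\bar D$ provides, for each $i \in [n]$, an element $t_i \in D$ with $v(y_i) - \omega(t_i) \in d^{2e+1} A'$, where $y_i$ is the class of $Y_i$ in $B$. Functoriality forces $g(t) \in d^{2e+1} D$ for every $g \in I$; in particular $f_j(t) = d^{2e+1} \alpha_j$ with $\alpha_j \in D$ determined modulo $(0 :_D d^{2e+1})$, and the latter equals $(0 :_D d^e)$ since the $e$-condition transfers from $A$ to $D$ via flatness, so $d^e \alpha_j$ is intrinsic. This is precisely where the defining condition on $e$ is used.

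Next I introduce auxiliary variables $Z = (Z_1, \ldots, Z_n)$ and perform the shift $Y_i \mapsto t_i + d^{e+1} Z_i$. Taylor expansion yields
\[
f_j(t + d^{e+1} Z) \;=\; d^{2e+1}\alpha_j + d^{e+1} \sum_k \partial_k f_j(t)\, Z_k + d^{2(e+1)} Q_j(Z),
\]
so dividing through by $d^{e+1}$ produces the well-defined polynomials
\[
h_j(Z) \;=\; d^e \alpha_j + \sum_k \partial_k f_j(t)\, Z_k + d^{e+1} Q_j(Z) \;\in\; D[Z]
\]
satisfying $d^{e+1} h_j(Z) = f_j(t + d^{e+1} Z)$. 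Each remaining generator $g$ of $I$ analogously produces a divided shift $\tilde g \in D[Z]$. Set $B' := D[Z]/J$, where $J$ is generated by the $h_j$'s and the $\tilde g$'s, with auxiliary generators as needed to neutralize $d$-torsion in $D$; the shift then makes the assignment $Y_i \mapsto t_i + d^{e+1} Z_i$ a well-defined $A$-morphism $B \to B'$, since for $g \in I$ one has $g(t + d^{e+1} Z) = d^{e+1} \tilde g(Z) \in d^{e+1} J$. For standard smoothness, substitute the shifted $Y_i$ into $P(Y) = \sum_j N_j(Y) M_j$; using $P - d \in I$, the result equals $d$ modulo $J$ in $B'$. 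The $M_j$ are $r \times r$ minors of the Jacobian $(\partial_k h_j)$, which differs from $(\partial_k f_j(t))$ by terms in $(d)$, and the hypothesis $N_j \in ((f) : I)$ guarantees that each $\tilde g$ lies in $((h) : J)$ in $B'$; thus $P$ itself, evaluated at the shift, exhibits an element of $((h):J)\cdot \Delta_h \cdot B'$ equal to $d$. Because the shift simultaneously makes $d$ a unit in $B'$, this yields the required standard-smoothness identity.

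It remains to extend $\omega$. Choose $s_k \in A'$ with $d^{e+1} s_k = v(y_k) - \omega(t_k)$, which exists since $v(y_k) - \omega(t_k) \in d^{2e+1} A' \subseteq d^{e+1} A'$; the ambiguity in $s_k$ lies in $(0 :_{A'} d^{e+1}) = (0 :_{A'} d^e)$ (transferred by flatness of $u$). Extending $\omega$ to $D[Z] \to A'$ by $Z_k \mapsto s_k$, direct calculation shows the $h_j$'s and the $\tilde g$'s are killed, yielding the desired $v' : B' \to A'$ with $v' \circ (B \to B') = v$. The main obstacle is the standard-smoothness verification above, especially for $l > 1$: a single minor $M_1$ with $N_1 = 1$ would reduce the argument to a standard Cramer's-rule / adjugate step, but the general case requires combining the adjugates of several $M_j$'s with the colon-ideal multipliers $N_j$ in a way that fits the standard-smoothness format. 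The conceptual key is that $P - d \in I$ becomes $P \equiv d$ modulo $J$ in $B'$; the technical book-keeping with $d$-torsion, in both $D$ and $A'$, reduces in each instance to the identity $(0 : d^e) = (0 : d^{e+1})$, transferred by flatness.
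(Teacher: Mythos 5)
Your skeleton matches the paper's: take an approximate lift $y'\in D^n$ of $y=v(Y)$ from hypothesis (2), note $I(y')\equiv 0$ and $P(y')\equiv d$ modulo $d^{2e+1}D$, shift $Y$ by a multiple of a power of $d$ in new variables, divide the Taylor expansion by $d^{e+1}$, and use $P$ together with the colon ideal $((f):I)$ to recover the rest of $I$. But the central mechanism is missing, and you half-admit this in your closing sentences. The shift $Y_i\mapsto t_i+d^{e+1}Z_i$ with \emph{generic} new variables $Z$ does not linearize anything useful: the Jacobian of your $h_j$ with respect to $Z$ is $(\partial_k f_j(t))$ up to terms in $(d)$, so its relevant $r\times r$ minors are $M_j(t)+(d)$, and these are \emph{not} units --- their combination $\sum_j N_j(t)M_j(t)=P(t)=ds$ is divisible by $d$, which is exactly the obstruction the whole proposition is designed to overcome. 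The paper instead shifts through the adjugates: completing each Jacobian submatrix to a square matrix $H_j$ with $\det H_j=M_j$ and setting $G_j=N_j\,\mathrm{adj}(H_j)$, one has $(\partial f/\partial Y)G_j=(M_jN_j\mathrm{Id}_r\,|\,0)$, so the substitution $s(Y-y')=d^e\sum_jG_j(y')T_j$ turns the linear part of $f$ into $ds\cdot(T_1,\ldots,T_r)$; after dividing by $d^{e+1}$ the new equations $g_i=s^pb_i+s^pT_i+d^{e-1}Q_i$ have Jacobian minor $s'\in s^{rp}+(T)\subset 1+(d,T)$, which maps to a unit of the \emph{local} ring $A'$. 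That Cramer/adjugate step is the entire point, and your proposal replaces it with the assertion that ``the shift simultaneously makes $d$ a unit in $B'$,'' which is false (if $d$ were a unit the statement would be nearly vacuous); what become units under $\psi$ are $s$, $s'$ and a further $s''$, all congruent to $1$ modulo $(d,T)$.

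A second, related gap is your treatment of the remaining generators of $I$. Simply adjoining all divided shifts $\tilde g$ to $J$ gives you no control over smoothness (too many equations, with no invertible Jacobian block). The paper keeps $I$ itself in the presentation $E=D[Y,T]/(I,g,h)$ and then \emph{proves} $I$ is redundant after localization: $((h,g)D[Y,T]_s):I$ contains $P$, hence an element $ds''$ with $s''\in 1+(d,T)$, and $s''IV\subset(0:_Vd)\cap d^eV=0$ because $V=(D[Y,T]/(h,g))_{ss'}$ is flat over $A$ and $(0:_Ad^e)=(0:_Ad^{e+1})$. This is where the defining condition on $e$ and the flatness of $D$ actually enter the endgame, not merely in normalizing the torsion ambiguity of $\alpha_j$ and $s_k$ as in your write-up. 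Finally, standard smoothness of $B'$ over $A$ comes from $B'=E_{ss's''}$ being smooth over the smooth $A$-algebra $D$; your construction, even repaired, would still owe this last step.
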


It is the purpose of our paper to provide a easier proof (see Theorem \ref{gnd0}) of Theorem \ref{gnd} in the local $\mathbb{Q}$-algebras following the ideas from Theorem \ref{m} and Proposition \ref{p}. Here $\mathbb{Q}$
denotes the field of rational numbers.
The proof is  not constructive (see Remark \ref{r}), but Proposition \ref{p2} shows  a case when this is so. 
We mention that the proof of Theorem \ref{gnd0} is easier, because mainly we had to apply only  Proposition \ref{p1}, which concentrates in short the ideas of the previous proofs from \cite{P0},
\cite{P1} in the frame of $\mathbb{Q}$-algebras. 

\section{A simple proof of the General Neron Desingularization}
\vskip 0.3 cm

Let $q\in \Spec A'$ be  a minimal prime associated ideal of $h_B=\sqrt{v(H_{B/A})A'}$. After \cite{S} we say that $A\to B\to A'\supset q$ is {\em resolvable} if there exists an $A$-algebra of finite type $C$ such that $v$ factors through $C$, let us say $v=\beta t$, $t:B\to C$, $\beta:C\to A'$ and $h_B\subset h_C=\sqrt{\beta(H_{C/A})A'}\not \subset q$.

The following lemma is in fact \cite[Lemma 6.6]{P1} which unifies \cite[Lemmas 12.2, 12.3]{S}. We give here  its proof in sketch for the sake of completeness.

\begin{Lemma} \label{l0} Suppose that $\height q=0$ and $A_p\to B_p= A_p\otimes_AB\otimes A'_q\supset qA'_q$,  $p=u^{-1}(q)$ is resolvable. Then $A\to B\to A'\supset q$ is resolvable too.  
\end{Lemma}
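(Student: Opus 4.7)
The plan is to spread out the local resolving data from $A_p$ back to $A$, exploiting the height-zero hypothesis on $q$ to handle denominators in $A'_q$ that would otherwise obstruct producing an $A$-morphism into $A'$.

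By the resolvability hypothesis there exist a finite-type $A_p$-algebra $\tilde{C}$, an $A_p$-morphism $\tilde{t}\colon B_p\to\tilde{C}$, and an $A_p$-morphism $\tilde{\beta}\colon\tilde{C}\to A'_q$ whose composition equals the localization of $v$, together with $h_{B_p}\subset h_{\tilde{C}}\not\subset qA'_q$. First I would spread out $\tilde{C}$: present $\tilde{C}=A_p[Y_1,\ldots,Y_m]/\tilde{J}$, clear denominators in a finite set of generators of $\tilde{J}$ to obtain $g_1,\ldots,g_k\in A[Y]$, and spread out $\tilde{t}$ to an $A$-morphism $t_0\colon B\to A[Y]/(g_1,\ldots,g_k)$, possibly after inverting an element $s\in A\setminus p$.

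The main step is to build a finite-type $A$-algebra $C$ with an $A$-morphism $\beta\colon C\to A'$ factoring $v$, satisfying $h_B\subset h_C\not\subset q$. Write $\tilde{\beta}(Y_i)=a_i/z$ with a common denominator $z\in A'\setminus q$ and $a_i\in A'$. Because $A'_q$ is Artinian local (from $\height q=0$) with unique maximal ideal $qA'_q$, the homogenized polynomials $g_j^\sharp(Y,Z):=Z^{\deg g_j}g_j(Y_1/Z,\ldots,Y_m/Z)\in A[Y,Z]$, whose evaluations $g_j^\sharp(a,z)\in A'$ vanish in $A'_q$, are killed in $A'$ by elements $u_j\in A'\setminus q$; analogous annihilators $w_l\in A'\setminus q$ exist for the polynomials encoding $\tilde{t}$ on $A$-algebra generators $b_1,\ldots,b_n$ of $B$. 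I would then form
\[
C:=A[X_1,\ldots,X_n,Y_1,\ldots,Y_m,Z,U_1,\ldots,U_k,W_1,\ldots,W_n]/K,
\]
where $K$ contains the relations defining $B$ in the $X_l$, the relations $U_j\,g_j^\sharp(Y,Z)$, and the relations encoding the $w_l$-annihilations; define $\beta$ on the new variables by the chosen elements of $A'$ and $t$ by $b_l\mapsto X_l$. By construction $\beta$ is well defined and $\beta\circ t=v$ on generators of $B$.

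The inclusion $h_B\subset h_C$ follows because the presentation of $C$ extends that of $B$, so each system $f\subset I_B$ yields Jacobian minors $\Delta_f$ that survive in the Jacobian of the presentation of $C$, and the ideal quotient $((f):I_B)$-behaviour transfers with some care to the extended presentation. For $h_C\not\subset q$, the witness element $h\in H_{\tilde{C}/A_p}$ with $\tilde{\beta}(h)$ a unit in $A'_q$ corresponds, through the homogenization and annihilator machinery, to a Jacobian minor in the presentation of $C$ whose $\beta$-image is a product of powers of $z$, of the $u_j$'s and $w_l$'s, and of the numerator of $\tilde{\beta}(h)$, all lying in $A'\setminus q$. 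The main obstacle will be this last verification — the careful bookkeeping of how the local smoothness witness transports through the homogenization into the global presentation of $C$. The height-zero hypothesis is essential throughout: it guarantees the annihilators $u_j,w_l$ lie outside $q$ (making $\beta$ well defined) and gives $A'_q$ the Artinian local structure in which $A'\setminus q$-elements become units, the property that ultimately forces the traced-through witness minor to escape $q$ in $A'$.
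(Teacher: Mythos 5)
Your first half — presenting the resolving $A_p$-algebra, clearing denominators, homogenizing the relations, and using that elements of $A'$ vanishing in $A'_q$ are killed by elements of $A'\setminus q$ — is essentially the paper's first step, and the verification you flag as "the main obstacle" ($h_C\not\subset q$) is handled there cleanly by noting that $D[Z_0,Z_0^{-1}]$ is still smooth over $A_p$ and is $B_p$-isomorphic to a localization of $C$, so a smoothness witness $t\in H_{C/A}$ with $\beta(t)\notin q$ does come out. The genuine gap is your claim that $h_B\subset h_C$ "follows because the presentation of $C$ extends that of $B$." This is false in general. By transitivity one only gets $H_{B/A}\cdot H_{C/B}\subset H_{C/A}$, and for your $C$ the ideal $H_{C/B}$ is proper: the added relations $U_j\,g_j^\sharp(Y,Z)$ (and the $w_l$-annihilation relations) cut out a scheme that is not smooth over $B$ where both factors vanish, so adding variables and relations shrinks, rather than preserves, the smooth locus over $A$. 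Concretely, a system $f\subset I_B$ witnessing smoothness of $B$ at a point does not witness smoothness of $C$ there, because $((f):I_C)$ is smaller than $((f):I_B)$ once $I_C$ acquires the new relations; there is no "transfer with some care."

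The paper's proof explicitly confronts this: after constructing $C$ it says "if $h_B\subset h_C$ there is nothing to show; otherwise..." and then uses the height-zero hypothesis a second time, in a way your proposal misses. Since $q$ is a minimal prime of $A'$ containing $h_B$, the ideal $h_B A'_q$ is nilpotent, so $wh_B^k=0$ in $A'$ for some $w\in A'\setminus q$. One then glues the two requirements with the algebra
$E=B[Z,U,V,W]/(g-\textstyle\sum_i U_iV_{ij},\,WU_1,\ldots,WU_s)$, sending $U$ to generators of $h_B$, $V$ to $0$, $W$ to $w$. On the chart $U_i\neq 0$ this is a polynomial algebra over a localization of $B$ (solve for $V_{ij}$), which gives $h_B\subset h_E$; on the chart $W\neq 0$ it is $C[V,W,W^{-1}]$, which gives $h_E\not\subset q$; and the relations $WU_i$ are satisfied in $A'$ precisely because $wh_B^k=0$ (after raising generators to powers). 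Without this gluing step your construction only achieves $h_C\not\subset q$, not the required containment $h_B\subset h_C$, so it does not prove resolvability as defined.
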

\begin{proof} By hypothesis there exists an $A_p$-algebra $D$ such that $v_p=A_p\otimes_A v$ factors through $D$, let us say $v_p$ is the composite map $B_p\to D\xrightarrow{\beta_p} A'_q$  and $\beta_p(H_{D/A_p})=A'_q$. Moreover we may take $D$ to be smooth over $A_p$ using for instance the easy lemma \cite[Lemma 2.4]{P0}

Let $D=B_p[Z]/(g)$, $Z=(Z_1,\ldots,Z_s)$, $g=(g_1,\ldots,g_e)\in B^e$ and $\beta_p$ be given by $Z\to z/z_0$ for some $z\in A'^s$, $z_0\in A'\setminus q$
Take the homogeneous polynomials $G(Z,Z_0)=Z_0^c g(Z/Z_0)$ for some $c>>1$. We have 
$G(z,z_0)=0$ in $A'_q$ and so $aG(z,z_0)=0$ in $A'$ for some $a\in A'\setminus q$. Changing $z,z_0$ by $az,az_0$ we may assume that $G(z,z_0)=0$ in $A'$. Let $C=B[Z,Z_0]/(G)$ and $\beta:C\to A'$ be the extension of $v$ by 
$(Z,Z_0)\to (z,z_0)$. We get a $B_p$-isomorphism $\nu:D[Z_0,Z_0^{-1}]\to (B_p\otimes_BC)_{Z_0} $ by $(Z,Z_0)\to (Z/Z_0,Z_0)$ and the map $C_{p,Z_0}\to A'_q$ induced by $\beta$, factors through $\nu$. We may change $D$ by the smooth $A_p$-algebra $D[Z_0,Z_0^{-1}]$, resp. $g$ by $G$, $C$ by $B[Z]/(g)$.
We have $h_C=\sqrt{\beta(H_{C/A})A'}\not \subset q$, let us say $\beta(t)\not\in q$ for some $t\in H_{C/A}$.

If $h_B\subset h_C$ there exists nothing to show. Otherwise, $h_B^k=0$ in $A'_q$, that is $wh_B^k=0$ in $A'$ for some $w\in A'\setminus q$. Let $b_1,\ldots,b_s$ be a system of generators of $h_B$ and set 
$f_j=g_j-\sum_iU_iV_{ij}$, for some new variables $U=(U_1,\ldots, U_s)$, $V=(V_{ij})_{i\in [s], j\in [e]}$. 
Then
$$E=B[Z,U,V,W]/(f,WU_1,\ldots,WU_s)$$
and $\delta:E\to A'$ extending $v$ by $U\to b$, $V\to 0$, $W\to w$ resolves
$A\to B\to A'\supset q$. Indeed, $E_{U_i}$ is a polynomial algebra over $B[U_i,U_i^{-1}]$, so smooth over $B$. Thus $E_{b_iU_i}$ is smooth over $A$ and $b_iU_i\in H_{E/A}$, that is $h_B\subset h_E=\sqrt{\delta(H_{E/A})A'}$. Note that $E_W\cong B[Z,V,W,W^{-1}]/(g)\cong C[V,W,W^{-1}]$
and so $E_{tW}$ is smooth over $A$. Hence $\beta(t)w\in h_E\setminus q$. 
\hfill\  \end{proof}

The  proof of Proposition \ref{p} works for the extension given below (its idea  comes from \cite[Proposition 7.1]{P0} and \cite[Lemma 7.2]{P1}).

 \begin{Proposition}\label{p1}
 Let $A$ and $A'$ be Noetherian  rings  and $u:A\to A'$ be a flat morphism. Suppose that $A'$ is local. Let $B=A[Y]/I$, $Y=(Y_1,\ldots,Y_n)$, $f=(f_1,\ldots,f_r)$, $r\leq n$ be a system of polynomials from $I$ as above, $(M_j)_{j\in [l]}$  some $r\times r$-minors    of the Jacobian matrix $(\partial f_i/\partial Y_{j'})$,  $(N_j)_{j \in [l]} \in  ((f):I)$ and set $P:=\sum _{j=1}^l N_jM_j$. Let  $v:B\to A'$ be an $A$-morphism. Suppose that
\begin{enumerate}
 \item{} there exists an element $d\in A$ such that $d\equiv P $ modulo $I$ and

 \item{} there exist a flat  $A$-algebra $D$ of finite type and an $A$-morphism $\omega:D\to A'$ such that $\Im v\subset \Im\omega +d^{2e+1}A'$ and for ${\bar A}=A/(d^{2e+1})$ (defining e by $(0:_Ad^e)=(0:_Ad^{e+1})$) the map $\bar{v}={\bar A} {\otimes}_{A} v: \bar{B}=B/d^{2e+1}B \to \bar{A}'=A'/d^{2e+1}A'$ factors through $\bar{D}=D/d^{2e+1}D$.
\end{enumerate}
 Then there exists a smooth $D$-algebra $B'$  such that $v$ factors through $B'$, let us say $v$ is the composite map $B\to B'\xrightarrow{w} A'$ and  $h_D=\sqrt{\omega(H_{D/A})A'}\subset h_{B'}=\sqrt{w(H_{B'/A})A'}$.
 \end{Proposition}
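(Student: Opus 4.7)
The plan is to follow the strategy from the proof of Proposition~\ref{p} in \cite{ZKPP}, weakening the conclusion from "$B'$ is standard smooth over $A$" to "$B'$ is smooth over $D$," as $D$ is now only assumed flat. First, using hypothesis~(2), pick a lift $y = (y_1, \ldots, y_n) \in D^n$ of the image of $Y_1, \ldots, Y_n$ under the factorization $\bar{h}:\bar{B}\to\bar{D}$. For every $p \in I$ the element $p(y) \in D$ then lies in $d^{2e+1}D$; in particular $f(y) = d^{2e+1}g$ for some $g \in D^r$, and, by hypothesis~(1), $P(y) = d + d^{2e+1}\alpha$ for some $\alpha \in D$. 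Assemble, from the classical adjoints of the $r\times r$ submatrices of the Jacobian $H = (\partial f_i/\partial Y_{j'})$ indexed by each minor $M_j$, an $n\times r$ matrix $G(Y)$ over $A[Y]$ satisfying the identity $H(Y)G(Y) = P(Y)\cdot I_r$ in $A[Y]$.

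Introduce fresh indeterminates $T = (T_1,\ldots,T_r)$ and consider the substitution $Y \mapsto Y^* := y + d^{2e}G(y)T$. The formal Taylor expansion of polynomials (valid in any characteristic) yields
\begin{align*}
f(Y^*) &= f(y) + d^{2e}H(y)G(y)T + d^{4e}R(y,T) \\
&= d^{2e+1}g + d^{2e+1}T + d^{4e+1}\alpha T + d^{4e}R(y,T) \\
&= d^{2e+1}\cdot q(T),
\end{align*}
where $R(y,T)\in D[T]^r$ has $T$-degree $\geq 2$ and $q(T) := g + T + d^{2e}\alpha T + d^{2e-1}R(y,T)$; a minor variation of the substitution handles the case $e=0$. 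Set $B' := D[T_1,\ldots,T_r]/(q_1,\ldots,q_r)$. The Jacobian $\partial q/\partial T$ reduces to $I_r$ modulo $d$, so $B'$ is \'etale, and in particular smooth, over $D$.

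It remains to (i)~check that $Y \mapsto Y^*$ descends from $A[Y]$ to a well-defined $A$-algebra map $B \to B'$, and (ii)~construct $w: B' \to A'$ extending $\omega$ such that the composite $B \to B' \xrightarrow{w} A'$ equals $v$. For~(i), the relations $f_i(Y^*) = 0$ in $B'$ hold by the above; for an arbitrary $h \in I$, the inclusion $N_j h \in (f)$ for all $j$ gives $P(Y^*)\,h(Y^*) = 0$ in $B'$, and writing $P(Y^*) = d\cdot u$ with $u \equiv 1 \pmod d$, one concludes $h(Y^*) = 0$ by an annihilator chase using the flatness of $B'/A$ and the stabilization $(0:_A d^e) = (0:_A d^{e+1})$. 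For~(ii), the hypothesis $\Im v \subseteq \Im \omega + d^{2e+1}A'$ produces $u' \in A'^n$ with $v(Y) - \omega(y) = d^{2e+1}u'$, and the values $w(T_i) \in A'$ are then determined, consistent with $q(w(T)) = 0$, from the equation $\omega(G(y))\,w(T) = d\,u'$ together with the identity $\omega(H(y))\omega(G(y)) = \omega(P(y))\, I_r$.

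Finally, the inclusion $h_D \subseteq h_{B'}$ is automatic: since $B'$ is smooth over $D$, any element of $H_{D/A}$ lies in $H_{B'/A}$ after base change, so $\omega(H_{D/A})A' \subseteq w(H_{B'/A})A'$. The main obstacle is the passage from $d\cdot u\cdot h(Y^*) = 0$ to $h(Y^*) = 0$ in step~(i); this is the technical heart of the argument, requiring the subtle annihilator chase imported from \cite[Lemma~7.2]{P1} and some additional care at places where $u$ is not a priori a unit.
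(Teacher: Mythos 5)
Your overall strategy (a Newton--Taylor perturbation of a lift $y$ of $v$, using adjoints of Jacobian minors and the identity $H G = P\cdot I_r$) is the same as the paper's, but there is a genuine gap at the step you dismiss in one line: the construction of $w$. You need $w(Y^*)=v(Y)$, i.e.\ $d^{2e}\omega(G(y))\,w(T)=v(Y)-\omega(y)=d^{2e+1}u'$, which is a system of $n$ equations in only $r$ unknowns $w(T_1),\dots,w(T_r)$. The identity $H(y)G(y)=P(y)I_r$ forces a candidate $\tau\approx\omega(H(y))u'$, but then $\omega(G(y))\tau=\omega(G(y)H(y))u'$, and $G(y)H(y)$ is an $n\times n$ matrix of rank at most $r$; it is \emph{not} $P(y)\,\mathrm{Id}_n$, so there is no reason that $d\,u'$ lies in the image of the rank-$\le r$ map $\omega(G(y)):A'^r\to A'^n$. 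In other words, your substitution $Y^*=y+d^{2e}G(y)T$ only moves $y$ inside an $r$-dimensional ``tangent direction'' of $A'^n$, while the actual discrepancy $v(Y)-\omega(y)$ is an arbitrary vector of $d^{2e+1}A'^n$. This is exactly the point the paper's proof is built to handle: for each minor $M_j$ one completes the $r\times n$ Jacobian to a square matrix $H_j$ by adding $n-r$ rows of $0$'s and $1$'s, takes the full $n\times n$ adjoint $G_j'$ and sets $G_j=N_jG_j'$, so that $\sum_j G_j(y')H_j(y')=P(y')\,\mathrm{Id}_n=ds\,\mathrm{Id}_n$; then with $n$ variables $T_j$ per minor (sharing the first $r$ coordinates, which is what makes the equations $g$ for $f$ come out in the shared $T_1,\dots,T_r$) the discrepancy $ds\nu$ is always realized as $\sum_j G_j(y')t_j$ with the explicit choice $t_j:=H_j(y')\nu$. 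Without this enlargement the map $w$ simply need not exist.

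Two smaller defects, fixable but worth noting: (i) $\det(\partial q/\partial T)\equiv 1$ modulo $(d,T)$ does not make $B'=D[T]/(q)$ \'etale over $D$; you must localize at that determinant (the paper's $s'$) and then use that its image in the local ring $A'$ is a unit. (ii) Your normalization $q=g+T+\cdots$ with $g=f(y)/d^{2e+1}$ breaks the annihilator argument at the end: from $d^{2e+1}q(w(T))=0$ you can only conclude $q(w(T))=0$ if $q(w(T))\in d^eA'$, which requires writing $f(y)=d^{e+1}b$ with $b\in d^eD^r$ and keeping $w(T)\in d^eA'^r$, as the paper does.
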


 \begin{proof} (Sketch after \cite[Proposition 3]{ZKPP})
 Let $\delta:B\otimes_AD\cong D[Y]/ID[Y]\to A'$ be the $A$-morphism given by $b\otimes \lambda\to v(b)\omega(\lambda)$.
We  show that
 $\delta$ factors through a special  $B\otimes_AD$-algebra $E$ of finite type.

By hypothesis $\bar v$ factors through 
a map $\bar B\to \bar D$ given, let us say, by $Y\to y'+d^{2e+1}D$, $y'\in D^n$. Thus $I(y')\equiv 0$ modulo $d^{2e+1}D$ and ${\tilde y}:=\omega (y')\equiv v(Y)=: y$ modulo $d^{2e+1}A'^n$, let us say 
 $y-\tilde y=d^{e+1}\nu$ for some $\nu\in d^eA'^n$.

For $j\in [q]$ we may complete the matrix $(\partial f_i/\partial Y_{j'})_{i\in [r],{j'}\in [n]}$  with some $(n-r)$ rows from $0,\ 1$ to get a square matrix $H_j$ with
 $M_j=\det H_j$. Since $d\equiv P$ modulo $I$ it follows that  and so  $P(y')=ds$ for some $s\in D$ with $s\equiv 1$ modulo $d$.
 Let $G'_j$ be the adjoint matrix of $H_j$ and $G_j=N_jG'_j$. We have
$G_jH_j=H_jG_j=M_jN_j\mbox{Id}_n$
and 
$$ds\mbox{Id}_n=P(y')\mbox{Id}_n=\sum_{j=1}^{q}G_j(y')H_j(y').$$
We obtain
 \begin{equation}\label{identity1}(\partial f/\partial Y){G_j}=(M_jN_j\mbox{Id}_r|0).\end{equation}
 ${t_j}:=H_j(y')\nu\in d^eA'^n$
satisfies
$${G}_j(y'){t_j}=M_j(y')N_j(y')\nu=d{s}\nu$$
 and 
 $${s}(y-\tilde y)=d^e\sum_{j=1}^{q}\omega({G}_j(y')){t_j}.$$
 Let
 \begin{equation}\label{def of h1}{h}={s}(Y-y')-d^e\sum_{j=1}^{q}{G}_j(y'){T_j},\end{equation}
 where  ${T_j}=({T}_1,\ldots,{T}_r, T_{j,r+1}, \ldots , T_{j,n})$ are new variables. The kernel of the map
${\phi}:D[Y,{T}]\to A'$ given by $Y\to y$, ${T_j}\to {t_j}$ contains ${h}$. Since
$${s}(Y-y')\equiv d^e\sum_{j=1}^{q}{G}_j(y')T_j\ \mbox{modulo}\ {h}$$
and
$$f(Y)-f(y')\equiv \sum_{j'}(\partial f/\partial Y_{j'})((y') (Y_{j'}-y'_{j'})$$
modulo higher order terms in $Y_{j'}-y'_{j'}$, by Taylor's formula we see that for $p=\max_i \deg f_i$ we have
\begin{equation}\label{def of Q1}{s}^pf(Y)-{s}^pf(y')\equiv  \sum_{j'}{s}^{p-1}d^e(\partial f/\partial Y_{j'})(y')\sum_{j=1}^{q} {G}_{jj'}(y'){T}_{jj'}+d^{2e}{Q}\end{equation}
modulo $h$ where ${Q}\in {T}^2 D[{T}]^r$.   We have $f(y')=d^{e+1}{b}$ for some ${b}\in d^eD^r$. Then
\begin{equation}\label{def of g1}{g}_i={s}^p{b}_i+{s}^p{T}_i+d^{e-1}{Q}_i, \qquad i\in [r] \end{equation}  is in the kernel of $\phi$ as in the proof of \cite[Proposition 3]{ZKPP}.

 Set ${E}=D[Y,{T}]/(I,{g},{h})$ and let  ${\psi}:{E}\to A'$ be the map induced by $\phi$. Clearly, $v$ factors through $\psi$ because $v$ is the composed map $B\to B\otimes_AD\cong D[Y]/I\to {E}\xrightarrow{{\psi}} A'$. Note that the $r\times r$-minor  ${s}'$ of $(\partial {g}/ \partial {T})$ given by the first  $r$-variables ${T}$ is from ${s}^{rp}+({T})\subset 1+(d,{T})$ because $Q\in ({T})^2$. Then $V:=(D[Y,{T}]/({h},{g}))_{{s}{s}'}$ is smooth over $D$.

As in the proof of \cite[Proposition 3]{ZKPP} we see that $I\subset ({h},{g})D[Y,{T}]_{{s}{s}'{s}''}$ for some other ${s}''\in 1+(d,{T})D[Y,{T}]$. Indeed, we have $ (({h},{g})D[Y,{T}]_{s}):I$ contains $P$ and so 
$P(y'+{s}^{-1}d^e\sum_{j=1}^qG_j(y')T_j)$ from $P(y')+d^e(T)D[Y,T]_s$. Thus $ (({h},{g})D[Y,{T}]_{s}):I$ 
contains an element of type $ds''$ for some $s''\in s+d^{e-1}(T)D[Y,T]_s$ and we get  
  ${s}''IV\subset (0:_{V}d)\cap d^eV=0$ because  $V$ is flat over $D$ and so over $A$ by hypothesis. Then ${E}_{{s}{s}'{s}''}\cong {V}_{{s}''} $ is a $B$-algebra which is also  smooth over $D$.

 As $\omega({s}), {\psi}({s}'),{\psi}({s}'')\equiv 1$ modulo $(d)$  we see that $\omega({s}),{\psi}({s}'), {\psi}({s}'')$ are invertible because  $A'$ is local. Thus ${\psi}$ (and so $v$) factors through the  $A$-algebra $B'={E}_{{s}{s}'{s}''}$. Clearly, $h_D\subset h_{B'}$.
\hfill\  \end{proof}

Next theorem is an extension of Theorem \ref{m}.

 \begin{Theorem} \label{m1}  Let $u:A\to A'$ be a flat morphism of Noetherian  rings and $q\in \Spec A'$. Suppose that   $A'$ is local, $p=u^{-1}(q)\in \Spec A$ generates the maximal ideal $qA'_q$ of $A'_q$ and $u$ induces a regular morphism $A_p\to A'_q$. Any $A$-morphism $v:B\to A'$  such that $v(H_{B/A})A'_q$ is $q A'_q$-primary ideal factors through an  $A$-algebra of finite type  $B'$,  let us say $v$ is the composite map $B\to B'\xrightarrow{w} A'$, with  $h_B\subset h_{B'}=\sqrt{w(H_{B'/A})A'}\not \subset q$, that is $A\to B\to A'\supset q$ is resolvable.
 \end{Theorem}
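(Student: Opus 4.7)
The plan is to arrange the hypotheses of Proposition \ref{p1} for $v: B \to A'$ and apply it, taking care that the resulting smooth $D$-algebra $B'$ inherits $h_{B'} \not\subset q$ from a carefully chosen $D$. First I would reduce to $\height q = 0$ via Lemma \ref{l0}; in this case $A'_q$ is Artinian local with $qA'_q = pA'_q$ nilpotent, and since $A_p \to A'_q$ is regular and $v(H_{B/A})A'_q$ is $qA'_q$-primary, the localized map $v_p : B_p \to A'_q$ has non-smooth locus concentrated at the closed point. This is precisely the setting in which Proposition \ref{p1} was designed to be used.

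Next I would produce $P \in H_{B/A}$ and $d \in A$ satisfying condition (1) of Proposition \ref{p1}. Choose a system $f \subset I$ with minors $(M_j)$ of the Jacobian and $(N_j) \in ((f):I)$ so that $P = \sum_j N_j M_j$ has $v(P)A'_q$ containing a power of $pA'_q = qA'_q$. By modifying $P$ within $H_{B/A}$ by additional terms of $((f):I)\Delta_f$ chosen to kill the $Y$-dependence modulo $I$, one arranges $P \equiv d \bmod I$ for some $d \in A$, where the obstruction to such a congruence is absorbed using the regularity of $A_p \to A'_q$ to lift the required $A'$-values back to $A$-values modulo a sufficiently high power of $p$.

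Then construct the flat $A$-algebra $D$ of finite type and the map $\omega: D \to A'$ for condition (2). Locally at $p$, the map $v_p$ factors through a smooth $A_p$-algebra $D_0$: this is an easy instance of local N\'eron desingularization, available because $A_p \to A'_q$ is a regular morphism of local rings with $pA'_q = qA'_q$, and in the $\mathbb{Q}$-algebra framework all relevant residue field extensions are separable. Spread $D_0$ to a flat finite-type $A$-algebra $D$ by inverting a single $a \in A \setminus p$, and lift the map $D_0 \to A'_q$ to $\omega: D \to A'$, using that $u(a)$ is a unit in $A'_q$ and $A'$ is local. Taking $d$ in a sufficiently large power of $p$ makes both $\Im v \subset \Im \omega + d^{2e+1}A'$ and the factorization of $\bar v$ through $\bar D$ modulo $d^{2e+1}$ hold.

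Applying Proposition \ref{p1} then yields a smooth $D$-algebra $B'$ with $v = w \circ (B \to B')$ and $h_D \subset h_{B'}$. Since $D$ becomes smooth over $A$ after inverting $a \notin p$, we have $\omega(H_{D/A})A'_q = A'_q$, so $h_D \not\subset q$, and hence $h_{B'} \not\subset q$; combined with $h_B \subset h_{B'}$ this gives the desired resolvability. The main obstacle is the coordinated construction of $(d, D, \omega)$: one must simultaneously arrange $d \equiv P \bmod I$ with $v(d)$ lying in a prescribed power of $pA'$, and produce a flat finite-type $A$-algebra $D$ spreading the local smoothening at $p$ through which $\bar v$ factors modulo $d^{2e+1}$. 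This is exactly where the content of N\'eron-type arguments concentrates, and the $\mathbb{Q}$-algebra hypothesis, via separability of residue field extensions along $u$, substantially simplifies the local desingularization step.
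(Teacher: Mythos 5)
Your proposal correctly identifies the target structure (produce $d$, a flat finite type $A$-algebra $D$ and $\omega:D\to A'$ satisfying (1) and (2) of Proposition \ref{p1}, then apply it), but the two steps that carry all the weight are wrong or circular. First, Lemma \ref{l0} cannot be used to ``reduce to $\height q=0$'': that lemma goes in the opposite direction (it upgrades resolvability of the \emph{localized} situation $A_p\to B_p\to A'_q$ to resolvability of $A\to B\to A'\supset q$, and only under the standing hypothesis $\height q=0$). There is no mechanism in the paper for lowering the height of $q$. The actual proof is an induction on $m=\height q$ via a system of parameters $\gamma_1,\dots,\gamma_m\in u^{-1}(v(H_{B/A})A'_q)$: the case $m=0$ is the Artinian base case, settled by the Artinian version of desingularization together with Lemma \ref{l0}, while for $m>0$ the induction hypothesis is applied to $\bar A=A/(d^{2e+1})\to \bar A'=A'/d^{2e+1}A'$ to produce $\bar D$ through which $\bar v$ factors, and $D$ is then obtained by \emph{lifting} $\bar D$ (writing $\bar D=(\bar A[Z]/(\bar g))_{\bar h\bar M}$, setting $f_i=g_i-d^{2e+1}T_i$ and $D=A[Z,T]/(f)$), which is what makes $D$ flat of finite type and gives $\Im v\subset\Im\omega+d^{2e+1}A'$ automatically.

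Second, your construction of $D$ is circular: you assert that $v_p$ factors through a smooth $A_p$-algebra $D_0$ as ``an easy instance of local N\'eron desingularization.'' That factorization is essentially the statement being proved (Theorem \ref{m1} is the engine that eventually yields Theorem \ref{gnd0}); it is not available as an input, and the $\mathbb{Q}$-algebra/separability remark does not help because Theorem \ref{m1} is stated for arbitrary Noetherian rings and, more importantly, because no desingularization result at $p$ has been established at this stage. The whole point of condition (2) of Proposition \ref{p1} is that one only needs the factorization \emph{modulo} $d^{2e+1}$, which is exactly what the induction on $m$ supplies. A further, smaller, gap: Proposition \ref{p1} only gives $h_D\subset h_{B'}$, so you must separately check $h_B\subset h_D$; in the paper this follows because $\bar A\otimes_AD\cong\bar D[T]$ is smooth over $\bar D$ and $h_B\bar A'\subset h_{\bar D}$, a verification absent from your argument.
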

\begin{proof} Let $m=\height\ p=\height\ q$ and choose $\gamma_1,\ldots,\gamma_m\in u^{-1}(v(H_{B/A}A'_q)$, which induces a system of parameters in $A'_q$. The proof goes with $A'_q$ instead $A'$ as in \cite[Theorem 2]{ZKPP} up to the "Cases", where we proceed as follows.

{\bf Case I} If $m=0$ then $A_p,\ A'_q$ are Artinian local rings and by
 \cite[Corollary 3.3]{P0} we get that $A_p\to A_p\otimes_A B\to A'_q\supset qA'_q$ is resolvable. Then by   Lemma  \ref{l0} it follows that $A\to B\to A'\supset q$ is resolvable.  

{\bf Case II}  If $m>0$  we assume  by induction hypothesis as in \cite[Theorem 2]{ZKPP} $\gamma=\gamma_m$, $d=d_m$ and the existence of 
 an $\bar A=A/(d^{2e+1})$-algebra  of finite type ${\bar D}=D/(d^{2e+1})D\cong({\bar A}[Z]/({\bar g}))_{{\bar h}{\bar M}}$, $Z=(Z_1,\ldots,Z_r)$, ${\bar g}=({\bar g}_1,\ldots,{\bar g}_s)$ such that $\bar{v}=\bar{A}\otimes_Av$ factors through $\bar D$, let us say $\bar v$ is the composite map $\bar{B}\to \bar{D}\xrightarrow{\bar{\omega}} \bar{A}'$ and $h_B{\bar A}'\subset h_{{\bar D}}= \sqrt{{\bar \omega}(H_{\bar{D}/\bar{ A}}){\bar A}'}\not \subset q{\bar A}'$.

Now, let $g, M,h$ be some liftings of $\bar{g}, \bar{M}, \bar{h} $ in $A[Z]$, $f_i=g_i-d^{2e+1}T_i$, $T=(T_1,\ldots,T_s)$ and 
set $D=A[Z,T]/(f)$. Let $t\in A'^s$ be such that $g(z)=d^{2e+1}t$. The map $\omega:D\to A'$ given by $(Z,T)\to (z,t)$ lifts  $\bar{\omega}$. It follows that $\Im v\subset \Im\  \omega +d^{2e+1}A'$. Note that $d\in h_D=\sqrt{\omega(H_{D/A})A'}$. It follows that $h_B\subset h_D\not \subset q$ because ${\bar A}\otimes_AD\cong {\bar D}[T]$ is smooth over $\bar D$ and  $h_B{\bar A}'\subset h_{{\bar D}}\not \subset q{\bar A}'$.

Applying Proposition \ref{p1} we see that $v$ factors through an $A$-algebra  of finite type  $B'$,  let us say $v$ is the composite map $B\to B'\xrightarrow{w} A'$ and  $h_D\subset h_{B'}=\sqrt{w(H_{B'/A})A'}$. It follows that $A\to B\to A'\supset q$ is resolvable.
\hfill\ \end{proof}

\begin{Remark}\label{r} {\em The above proof of the  theorem seems to be not constructive as it is Theorem \ref{m} (more details in Proposition \ref{p2}). If $A'={\bf Q}[[x_1.x_2]]$ and $\height q=1$ then to resolve $A\to B\to A'\supset q$  we might need to define precisely $v$ modulo a power of $q$, that is to give formal power series in $x$  with infinite number of coefficients. 
Indeed, our procedure is based in resolving $A/(d^{2e+1})\to B/d^{2e+1}B\to A'/d^{2e+1}A'\supset q/d^{2e+1}A'$ for some $d\in A$ with $dA'\subset \sqrt{v(H_{B/A})A}'$. As in the proof of  Proposition \ref{p1} we had to find $y'$ in $D$ with $\omega(y')\equiv y$ modulo $d^{2e+1}A'$, or modulo an ideal generated by several powers of such $d$, or almost modulo a high enough power of a minimal prime ideal $q$ associated to $v(H_{B/A})A'$. When $A'/q$ is not Artinian then it is not enough to use a high jet of $y$.
When $q=(x_1,x_2)$ then to define $v$ modulo a power of $q$ means only to give a finite number of such coefficients (this follows from the proof of Theorem \ref{m}).}
\end{Remark}

\begin{Lemma}\label{l}
Let  $u:A\to A'$ be a regular morphism of Noetherian  rings, $B$ an $A$-algebra of finite type, $v:B\to A'$ an $A$-algebra 
morphism and $q\in \Spec A'$ a minimal prime associated ideal of $h_B$. Suppose that $A'$ is local, $\height q\geq 1$ and  char $ A_p/pA_p=0$, $p=u^{-1}(q)$.
Then $A\to B\to A'\supset q$ is resolvable, that is  $v$ factors through an $A$-algebra of finite type $C$, let us say $v$ is the composite map $B\to C\xrightarrow{w} A'$, such that $h_B\subset h_C=\sqrt{w(H_{C/A})A'}\not \subset q$. 
\end{Lemma}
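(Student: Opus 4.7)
The plan is to apply Proposition~\ref{p1} with a carefully chosen element $d\in A$ and a smooth $A$-algebra $D$ of finite type. The payoff is immediate: because $D$ will be smooth over $A$, we have $H_{D/A}=D$ and therefore $h_D=A'$, so the conclusion $h_D\subset h_{B'}$ of Proposition~\ref{p1} automatically produces $h_{B'}=A'\not\subset q$. To choose $d$: since $q$ is a minimal prime over $h_B$, $v(H_{B/A})A'_q$ is $qA'_q$-primary. Pick a representation $P=\sum_{j}N_jM_j\in((f):I)\Delta_f$ and, by adjusting $P$ modulo $I$, arrange that its class in $B$ lies in the image of $A\to B$, say $P\equiv d\bmod I$ with $d\in A$. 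Automatically $d\in p$, since $u(d)=v(d+I)\in h_B\subset q$.

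The construction of $D$ is the heart of the argument, and this is where the hypothesis $\chara k(p)=0$ enters. Separability of $k(p)\to k(q)$, combined with the regularity of $A_p\to A'_q$ (whose fiber $A'_q/pA'_q$ is therefore a regular local ring), allows us to lift (i) a regular system of parameters $\eta_1,\ldots,\eta_c$ of $A'_q/pA'_q$ into $q$, (ii) a separating transcendence basis and primitive elements for $k(q)/k(p)$ into $A'$, and (iii) finitely many elements of $v(B)$ sufficient to generate $v(B)$ modulo $d^{2e+1}A'$. These data package into a smooth $A$-algebra $D$ of finite type: the algebraic primitive elements contribute \'etale relations (monic polynomials with non-vanishing derivative, available thanks to char~$0$), while parameters, transcendentals, and approximations contribute free variables. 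Then $\omega:D\to A'$ is defined on generators, $\Im v\subset \Im\omega+d^{2e+1}A'$ by (iii), and the factorization of $\bar v=v\bmod d^{2e+1}$ through $\bar D=D/d^{2e+1}D$ comes from matching the constructed lifts modulo $d^{2e+1}$, possibly after an induction on $\height q$ whose base case ($\height q=1$) collapses to a situation where, after reducing modulo $d^{2e+1}$, the fiber dimension drops enough to invoke Theorem~\ref{m1}.

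By Proposition~\ref{p1}, $v$ then factors through a smooth $D$-algebra $B'$ with $h_D\subset h_{B'}$. As noted, $h_D=A'$, so $h_{B'}=A'\not\subset q$; and $h_B\subset h_{B'}$ follows from the explicit construction $B'=E_{ss's''}$ of Proposition~\ref{p1}, where $E=D[Y,T]/(I,g,h)$ contains $B$ together with its Jacobian structure (the additional relations $g$ and $h$ live above $B$ in the extra variables $T$, $Z$, so the Jacobian-type minors witnessing $H_{B/A}$ extend to those witnessing $H_{B'/A}$). Therefore $A\to B\to A'\supset q$ is resolvable with $C=B'$. \textbf{The main obstacle} is step two: producing $D$ smooth over $A$ that simultaneously realizes the approximation $\Im v\subset\Im\omega+d^{2e+1}A'$ and the modular factorization of $\bar v$. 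The characteristic-zero hypothesis is essential, since it replaces the inseparable corrections of the general-characteristic N\'eron desingularization with unconditional \'etale extensions of the residue field, making the lift to a smooth $A$-algebra automatic once the parameters of the fiber and the residue field generators have been chosen.
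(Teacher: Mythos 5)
Your proposal has a genuine gap at its center, and you name it yourself: the construction of a smooth $A$-algebra $D$ of finite type with $\Im v\subset\Im\omega+d^{2e+1}A'$ and with $\bar v$ factoring through $\bar D$ is exactly the hard content of the whole desingularization, not a step one can wave through. If such a globally smooth $D$ were available in one stroke, then (as you observe) $h_D=A'$ and Proposition~\ref{p1} would finish everything immediately --- which is why no such $D$ is produced anywhere in the paper. In the actual argument (Theorem~\ref{m1}, Case~II), the algebra $D=A[Z,T]/(f)$ with $f_i=g_i-d^{2e+1}T_i$ is only \emph{flat} of finite type, one merely has $d\in h_D$ and $h_D\not\subset q$, and the factorization of $\bar v$ through $\bar D$ is obtained by an induction on $\height q$ whose base case ($m=0$, the Artinian situation) rests on \cite[Corollary 3.3]{P0} together with Lemma~\ref{l0}. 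Your sketch of (i)--(iii) ``packaging into a smooth $D$'' with the factorization ``coming from matching the constructed lifts modulo $d^{2e+1}$'' is precisely the unproved assertion; the parenthetical ``possibly after an induction on $\height q$'' does not supply it.

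You also miss the one genuinely new (and short) idea in the paper's proof of this lemma, which is a \emph{reduction}, not a construction. Theorem~\ref{m1} requires that $p=u^{-1}(q)$ generate $qA'_q$, i.e.\ that the fiber $A'_q/pA'_q$ be zero-dimensional. When $t=\height q-\height p>0$ one adjoins variables $Z=(Z_1,\dots,Z_t)$ mapping to a regular system of parameters $z$ of the regular local ring $A'_q/pA'_q$; the local criterion of flatness makes $A_1=A_p[Z]_{(p,Z)}\to A'_q$ flat, and the hypothesis $\chara A_p/pA_p=0$ upgrades its regular fibers to geometrically regular ones, so $u_1$ is regular with $(p,Z)$ generating $qA'_q$. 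Then Theorem~\ref{m1} applies to $A[Z]\to B[Z]\to A'\supset q$, and resolvability descends to $A\to B\to A'\supset q$. This is where char~$0$ actually enters --- not, as in your proposal, through separating transcendence bases and primitive elements of $k(q)/k(p)$. Your final claim that $h_B\subset h_{B'}$ ``follows from the explicit construction $B'=E_{ss's''}$'' is also unsupported as stated (Proposition~\ref{p1} only yields $h_D\subset h_{B'}$); in the paper this inclusion is secured in Theorem~\ref{m1} by checking $h_B\subset h_D$ for the specific flat $D$ built there.
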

\begin{proof} 
 Apply  Theorem  \ref{m1} if $t:=\height q-\height p=0$. Otherwise,
$A'_q/p A'_q$ is a regular local ring of dimension $t>0$ by the regularity of $u$. Suppose that 
$z=(z_1,\ldots,z_t)\in A'^t$ induces a regular system of parameters in $A'_q/p A'_q$.
Let $A[Z]$ be the polynomial $A$-algebra in the variables $Z=(Z_1,\ldots,Z_t)$. By the local criterion of flatness \cite{M}, the map $u_1:A_1=A_p[Z]_{(p,Z)}\to A'_q$ given by $Z\to z$
is flat, even regular since $A'_q/p A'_q$ is regular and char  $ A_p/pA_p=0$. Let $v_1:B_1=A_1\otimes_AB\to A'_q$ be the extension of $v$ given by $Z\to z$.
Then $v_1(H_{B_1/A_1})A'_q$ is a $q A'_q$-primary ideal and applying Theorem \ref{m1} we see that $A[Z]\to B[Z]\to A'\supset q$ is resolvable and so $A\to B\to A'\supset q$ is resolvable too.
\hfill\ \end{proof}

\begin{Theorem} \label{gnd0}
Let $u:A\to A'$ be a regular morphism of Noetherian  $\mathbb{Q}$-algebras, $B$ an $A$-algebra of finite type and $v:B\to A'$ an $A$-algebra 
morphism. Suppose that $A'$ is a local ring.
Then $v$ factors through a standard smooth  $A$-algebra.
\end{Theorem}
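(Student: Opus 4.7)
My plan is to iteratively resolve $v$ at the minimal primes of $h_B := \sqrt{v(H_{B/A})A'}$ in $A'$ until $h_B = A'$, and then to extract a standard smooth $A$-algebra by one further localization.

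Suppose $h_B \neq A'$ and fix a minimal prime $q$ of $h_B$ in $A'$; set $p = u^{-1}(q)$. If $\height q = 0$, then regularity of $u$ forces the fiber $A'_q/pA'_q$ to be a regular local ring of dimension $0$, i.e.\ a field, hence $pA'_q = qA'_q$; moreover $v(H_{B/A})A'_q$ is $qA'_q$-primary because its radical is the maximal ideal of the local ring $A'_q$. Theorem \ref{m1} then produces an $A$-algebra $C$ of finite type with $v$ factoring as $B \to C \xrightarrow{w} A'$ and $h_B \subset h_C \not\subset q$. If $\height q \geq 1$, Lemma \ref{l} gives the same conclusion, its hypothesis $\chara A_p/pA_p = 0$ being automatic because $A$ is a $\mathbb{Q}$-algebra.

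Replacing $(B,v)$ by $(C,w)$ and iterating, I obtain a strictly ascending chain $h_B \subsetneq h_{C_1} \subsetneq h_{C_2} \subsetneq \cdots$ of ideals in $A'$; strictness holds because the minimal prime $q_i$ resolved at step $i$ contains $h_{C_{i-1}}$ but not $h_{C_i}$. Since $A'$ is Noetherian the chain must stabilize, and so long as $h_{C_N} \neq A'$ there is still a minimal prime to resolve; hence the iteration terminates with $h_{C_N} = A'$ for some $N$, with an associated factorization $v : B \to C_1 \to \cdots \to C_N \xrightarrow{w_N} A'$.

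At termination, $w_N(H_{C_N/A})A' = A'$, so $w_N(h)$ is a unit for some $h \in H_{C_N/A}$. Writing $C_N = A[Y]/I_N$, the definition of $H_{C_N/A}$ gives a power $h^k \in \sum_f ((f):I_N)\Delta_f C_N$; applying $w_N$ and using that $A'$ is local (so a finite sum can be a unit only if one summand is), I extract a single system $f \subset I_N$ of length $r \leq n$, an element $N_f \in ((f):I_N)$, and an $r \times r$ minor $M_f$ of the Jacobian of $f$ such that $w_N(N_f M_f)$ is a unit in $A'$. Then $(C_N)_{N_f M_f}$ satisfies $((f):I_N)\Delta_f (C_N)_{N_f M_f} = (C_N)_{N_f M_f}$, hence is standard smooth over $A$, and the composition $B \to C_1 \to \cdots \to C_N \to (C_N)_{N_f M_f}$ is the desired factorization. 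The main step requiring care is the verification that each resolution step strictly enlarges $h_B$ (which I expect to follow directly from the conclusion ``$h_B \subset h_C \not\subset q$'' of Theorem \ref{m1} and Lemma \ref{l}), so that the Noetherian termination argument is legitimate; the final extraction of a standard smooth localization is then essentially book-keeping.
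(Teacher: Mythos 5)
Your proposal is correct and follows essentially the same route as the paper: iterate the resolution of $A\to B\to A'\supset q$ at minimal primes of $h_B$ via Lemma \ref{l} (and Theorem \ref{m1}), terminate the strictly increasing chain of radicals by Noetherianity, and then localize to obtain a standard smooth algebra. The only cosmetic differences are that you treat height-zero minimal primes inline through Theorem \ref{m1} where the paper reduces upfront to $\height v(H_{B/A})A'\geq 1$, and you perform the final standard-smooth extraction by hand (where one must still re-present the localization $(C_N)_{N_fM_f}$ with the inverted element adjoined, which is exactly what the paper's citations of \cite[Lemmas 2.4, 3.4]{P0} supply).
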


\begin{proof} As in \cite[Corollary 3.3]{P0}  we may suppose that $\height v(H_{B/A})A'\geq 1$. This could be done in  a constructive way (see for instance  the beginning of Section 2 of \cite{AP} in the case when $A'$ is a domain). Let  $q\in \Spec A'$ be a minimal prime associated ideal of $v(H_{B/A})A'$. Applying Lemma \ref{l} we see that $v$ factors through an $A$-algebra of finite type $B_1$, let us say $v$ is the composite map $B\to B_1\xrightarrow{w_1} A'$, such that $v(H_{B/A})A'\subset \sqrt{ w_1(H_{B_1/A})A'}\not \subset q$.  Applying again Lemma \ref{l} for $B_1$ and a certain minimal prime associated ideal $q_1$ of $w_1(H_{B_1/A})A' $ we see that $w_1$ factors through an $A$-algebra of finite type $B_2$, let us say $w_1$ is the composite map $B_1\to B_2\xrightarrow{w_2} A'$, such that $w_1(H_{B_1/A})A'\subset \sqrt{ w_2(H_{B_2/A})A'}\not \subset q_1$. By Noetherianity this procedure should end since the strictly increasing chain  $v(H_{B/A})A'\subset \sqrt{ w_1(H_{B_1/A})A'}\subset \sqrt{w_2(H_{B_2/A})A'}\subset \ldots $ should stop. This could happens only when $w_i(H_{B_i/A})A'=A'$ for some $i$.
Using \cite[Lemma 2.4]{P0} we may replace $B_i$ by an $A$-algebra $C$   
of finite type with $H_{C/A}=C$, that is $C$ is a smooth algebra over $A$. We may choose $C$ to be standard smooth over $A$ using \cite[Lemma 3.4]{P0} (see also \cite[Lemma 4]{ZKPP}).
\hfill\ \end{proof}

Next, let $u:A\to A'$ be a regular morphism of Noetherian  local rings of dimension two, $B$ an $A$-algebra of  finite type and $v:B\to A'$ an A-morphism. If $h_B=\sqrt{v(H_{B/A})A'}$ is the maximal ideal of $A'$ then we have a constructive General Neron Desingularization using Theorem \ref{m} (see also \cite{PP1}). Always we may reduce to the case $\height h_B\geq 1$ as we noticed in the proof of Theorem \ref{gnd0}. The difficulties to construct a General Neron Desingularization could appear only when we show that $A\to B\to A'\supset q $ is resolvable for a minimal prime associated ideal $q$ of $h_B$ with $\height q=1$.

\begin{Proposition}\label{p2} If there exists  a minimal associated ideal $q$ of $h_B$ such that  $u^{-1}(q)$ has height $1$ then $A\to B\to A'\supset q$ is resolvable in a constructive way. Consequently, in this case Theorem \ref{gnd0} holds in a constructive way.
\end{Proposition}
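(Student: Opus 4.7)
The plan is to trace through the proof of Theorem \ref{m1} in the present situation and verify that every step remains effective under the hypothesis $\height p = 1$. By the discussion preceding the proposition, only the case $\height q = 1$ needs attention; the case $\height q = 2$, in which $q$ is the maximal ideal of $A'$, is already settled constructively by Theorem \ref{m}. So I assume $\height q = 1$. Regularity of $u$ then makes the fiber $A'_q/pA'_q$ a regular local ring of dimension $\height q - \height p = 0$, i.e.\ a field, so $pA'_q = qA'_q$, and Theorem \ref{m1} applies directly with $m = 1$ (no auxiliary variable extension as in Lemma \ref{l} is needed).

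Running through Case II of the proof of Theorem \ref{m1} with $m = 1$, the plan is to choose an element $\gamma \in p \cap u^{-1}(v(H_{B/A}A'_q))$ inducing a parameter of the one-dimensional ring $A'_q$, and to form the associated $d \in A$ from a witness in $((f):I)\Delta_f$. Since $\gamma \in p$ and $\height p = 1$, the element $d$ is a parameter of the one-dimensional ring $A_p$, so $\bar A_p = A_p/d^{2e+1}A_p$ is Artinian; moreover $\bar q = q/d^{2e+1}A'$ has height $0$ in $\bar A' = A'/d^{2e+1}A'$. Therefore the induction step collapses to the Artinian Case I of Theorem \ref{m1} over $\bar A$, which is produced effectively by Corollary 3.3 of \cite{P0} together with Lemma \ref{l0} (applicable since $\height \bar q = 0$, and whose proof is purely algebraic manipulation of homogenizations and denominator clearings).

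With $\bar D$ in hand, one lifts $\bar g$ to $g \in A[Z]^s$, lifts $\bar\omega(Z)$ to any $z \in A'^s$, and sets $t := g(z)/d^{2e+1} \in A'$; then $D = A[Z,T]/(g - d^{2e+1}T)$ is flat over $A$ with an explicit map $\omega : D \to A'$ extending $\bar\omega$. Proposition \ref{p1} applied to $D$ then produces the required $B'$ by algebraic manipulation of the formulas \eqref{def of h1}--\eqref{def of g1}, and iterating this construction inside the loop of the proof of Theorem \ref{gnd0} yields, after finitely many effective steps by Noetherianity, a standard smooth $A$-algebra factoring $v$.

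The main obstacle, and precisely the point where $\height p = 1$ is essential, is the approximation step inside the proof of Proposition \ref{p1}: one must produce $y' \in D$ with $\omega(y') \equiv v(Y) \pmod{d^{2e+1}A'}$. As Remark \ref{r} explains, in general such $y'$ would involve infinitely many coefficients of an element of $A'$; here, however, the relevant information is controlled entirely by the image of $v(Y)$ in the Artinian ring $A'_q/d^{2e+1}A'_q$, which is a finite-length module over the residue field, and this image is read off from a high enough jet of $v$ in finitely many operations. This is the only place where the argument could fail to be algorithmic, and the hypothesis $\height p = 1$ is exactly what removes the obstruction.
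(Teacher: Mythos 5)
Your reduction matches the paper's up to the decisive step: you correctly observe that $\height p=1$ lets one choose $\gamma\in p$, hence $d\in A$, so that no auxiliary variables as in Lemma \ref{l} are needed, and that everything then hinges on resolving ${\bar A}=A/(d^{2e+1})\to \bar B\to {\bar A}'=A'/d^{2e+1}A'\supset q{\bar A}'$ constructively before lifting via Proposition \ref{p1}. But at that step you diverge from the paper and your argument has a genuine gap. You propose to localize at $\bar q$ (height $0$), invoke the Artinian case \cite[Corollary 3.3]{P0}, and globalize by Lemma \ref{l0}. This yields \emph{resolvability} --- it is exactly Case I of Theorem \ref{m1}, i.e.\ the route the paper itself declares non-constructive in Remark \ref{r} --- but not an algorithm: Corollary 3.3 of \cite{P0} is an existence statement, and the proof of Lemma \ref{l0} requires exhibiting elements $a,w\in A'\setminus q$ with $aG(z,z_0)=0$ and $wh_B^k=0$, which are extracted from vanishing in the localization and are not produced by any finite procedure from the given data.

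Your closing claim, that the approximation $y'$ with $\omega(y')\equiv v(Y)$ modulo $d^{2e+1}A'$ is ``controlled entirely by the image of $v(Y)$ in the Artinian ring $A'_q/d^{2e+1}A'_q$,'' is the precise point of failure: the congruences required in Proposition \ref{p1}(2) must hold in $A'/d^{2e+1}A'$ itself, which is a \emph{one-dimensional} ring (here $\dim A'=2$ and $\height(dA')=1$), not in its Artinian localization at $\bar q$; elements of $A'/d^{2e+1}A'$ are not determined by finite jets, so ``reading off a high enough jet of $v$'' does not supply the needed data. The paper closes exactly this gap differently: it observes that ${\bar A}\to\bar B\to{\bar A}'\supset q{\bar A}'$ is a General N\'eron Desingularization problem for \emph{one-dimensional} local rings and cites the dedicated constructive algorithm of \cite{PP}, which performs a second effective descent modulo a parameter of ${\bar A}'$ to a genuinely Artinian quotient (not a localization) where finite jets do suffice. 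To repair your proof you would need to replace the appeal to \cite[Corollary 3.3]{P0} and Lemma \ref{l0} by that one-dimensional constructive result, or reproduce its second descent explicitly.
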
 

\begin{proof} Let  $B=A[Y]/I$, $Y=(Y_1,\ldots,Y_n)$ and $q$ be a minimal associated ideal  of $h_B$ such that $u^{-1}(q)$ has height $1$. Choose  $\gamma\in u^{-1}(v(H_{B/A})A')$ such that $\height (\gamma A')=1$.  As in the proof of \cite[Theorem 2]{ZKPP} we may change $B$ such that a power $d$ of $\gamma$  is in $((f):I)\Delta_f$ for some polynomials $f=(f_1,\ldots,f_r)$, $r\leq n$ from $I$. Choosing $e$ as in Proposition \ref{p1} it is enough to see  as in {\bf Case
2} from Theorem \ref{m1} that ${\bar A}=A/(d^{2e+1})\to \bar{B}={\bar A}\otimes_AB\to {\bar A}'=A'/d^{2e+1}A'\supset q{\bar A}'$ is resolvable. But this is done in \cite{PP} because we are now in dimension one.  
\hfill\ \end{proof}

\end{document}